\documentclass[12pt]{amsart}

\usepackage{amsmath, amssymb, amsthm}
\usepackage{hyperref}
\usepackage{microtype}

\newtheorem{theorem}{Theorem}[section]
\newtheorem{proposition}[theorem]{Proposition}
\newtheorem{lemma}[theorem]{Lemma}
\newtheorem{corollary}[theorem]{Corollary}
\theoremstyle{remark}
\newtheorem{remark}[theorem]{Remark}

\newcommand{\Z}{\mathbb{Z}}

\newcommand{\Rr}[1]{\mathcal{R}(#1)}

\begin{document}

\title[Non-isomorphism of Integer-Coefficient Power Series Rings]{Non-isomorphism of rings of integer-coefficient holomorphic
functions on disks of varying radius}

\author{Jon Bannon \and David Feldman}

\date{}

\begin{abstract}
For $\rho \in (0,1]$, let $\Rr{\rho} = \Z[[z]] \cap \mathcal{O}(B(0,\rho))$
denote the ring of power series with integer Taylor coefficients converging
on the open disk $B(0,\rho)$.  We prove that these rings are pairwise
non-isomorphic as abstract rings.  Three ingredients drive the proof:
the ideal $(z)$ is the unique principal ideal with quotient $\Z$, so
any isomorphism sends $z$ to a generator $g$ of $(z)$; every isomorphism
is substitution by $g$, because it respects the $(z)$-adic filtration;
and a Hadamard gap series with a natural boundary at $|w| = \rho_1$
forces the image $g(B(0,\rho_2))$ into $B(0,\rho_1)$, after which the
Schwarz lemma and integrality of coefficients force $g = \pm z$ and
$\rho_1 = \rho_2$.
\end{abstract}

\maketitle

\section{Introduction}

For $\rho \in (0,\infty)$, define
\[
  \Rr{\rho} = \Z[[z]] \cap \mathcal{O}(B(0,\rho))
\]
to be the ring of formal power series $\sum_{n\geq 0} a_n z^n$ with
$a_n \in \Z$ for all $n$, converging on $B(0,\rho)$.  For $\rho > 1$
the Hadamard formula forces $a_n = 0$ eventually, so $\Rr{\rho} = \Z[z]$
for $\rho > 1$.  We restrict to $\rho \in (0,1]$ for the remainder of
the paper.

\subsection*{The non-isomorphism question}

At first sight, non-isomorphism for different $\rho$ seems clear: the
biholomorphism $B(0,\rho_1) \cong B(0,\rho_2)$ does not preserve integer
coefficients.  But these are not topological rings, and ring homomorphisms
need not be continuous.  A ring isomorphism $\Rr{\rho_1} \to \Rr{\rho_2}$
could in principle behave wildly on infinite series.  Moreover, the
zero-set geometry of $\Rr{\rho}$ is, up to scaling, independent of
$\rho$~\cite{BannonFeldman}.  Non-isomorphism requires proof.

\subsection*{Strategy}

Any ring isomorphism $\phi \colon \Rr{\rho_1} \to \Rr{\rho_2}$ sends
the ideal $(z)$ to itself (it is the unique principal ideal with
quotient $\Z$), so $g := \phi(z) = zu$ for a unit $u$.  The first key
step is that $\phi$ is then \emph{substitution by $g$}: $\phi(f) =
f\circ g$ for every $f$.  This is not a continuity hypothesis but a
consequence of $\phi$ respecting the $(z)$-adic filtration, since
$\phi((z)^N) = (g)^N = (z)^N$ and $\bigcap_N (z)^N = 0$.  In particular
$g$ and $h := \phi^{-1}(z)$ are compositional inverses.

The second step controls the image of $g$.  Using a Hadamard gap
series $f \in \Rr{\rho_1}$ whose circle $|w| = \rho_1$ is a natural
boundary, the requirement that $f \circ g = \phi(f)$ converge on
$B(0,\rho_2)$ forces $g(B(0,\rho_2)) \subseteq B(0,\rho_1)$: a value
$g(z_1)$ of modulus $> \rho_1$ would continue $f$ across its natural
boundary.  With this containment (and its mirror for $h$), $g$ is a
disk biholomorphism fixing $0$, so the Schwarz lemma makes it linear;
integrality of the coefficients of $g$ and $g^{-1}$ then forces $g =
\pm z$ and $\rho_1 = \rho_2$.

\section{The ideal $(z)$ and substitution}
\label{sec:z}

\begin{lemma}[Characterization of $(z)$]
\label{lem:z_ideal}
The ideal $(z)$ is the unique principal ideal $I \subset \Rr{\rho}$
with $\Rr{\rho}/I \cong \Z$.
\end{lemma}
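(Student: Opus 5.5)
The plan is to show first that $(z)$ has the stated property, and then to pin down any other such principal ideal by reducing modulo every prime $p$.

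\emph{Existence.} Evaluation at $0$, $f\mapsto f(0)=a_0$, is a surjective ring homomorphism $\Rr{\rho}\to\Z$. Its kernel consists of the series with $a_0=0$. Any such series is $f=z\cdot(f/z)$, and $f/z=\sum_{n\ge 0}a_{n+1}z^n$ has integer coefficients and the same radius of convergence, so $f/z\in\Rr{\rho}$. Hence the kernel is $(z)$ and $\Rr{\rho}/(z)\cong\Z$.

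\emph{Reduction mod $p$.} Fix a prime $p$. Reduction of coefficients gives a ring map $\Rr{\rho}\to\mathbb{F}_p[[z]]$.
\begin{itemize}
\item It is surjective: lift each coefficient to $\{0,\dots,p-1\}$; a series with bounded coefficients converges on $B(0,1)\supseteq B(0,\rho)$.
\item Its kernel is $p\Rr{\rho}$: if all coefficients of $f$ are divisible by $p$, then $f/p$ has integer coefficients and the same radius.
\end{itemize}
Thus $\Rr{\rho}/p\Rr{\rho}\cong\mathbb{F}_p[[z]]$, a discrete valuation ring.

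\emph{Coefficients of a generator.} Now suppose $I=(f)$ with $\Rr{\rho}/(f)\cong\Z$, and write $f=\sum a_nz^n$. Then
\[
\Rr{\rho}/(f,p)\cong\Z/p\Z .
\]
On the other hand, $\Rr{\rho}/(f,p)\cong\mathbb{F}_p[[z]]/(\bar f)$. In $\mathbb{F}_p[[z]]$ we have $\bar f=z^k\cdot(\text{unit})$ with $k=\mathrm{ord}_z\bar f$, or $\bar f=0$. The quotient has exactly $p$ elements only when $k=1$. This means $p\mid a_0$ and $p\nmid a_1$, and it holds for every prime $p$. Therefore $a_0=0$ and $a_1=\pm1$, so $f=zv$ with $v\in\Rr{\rho}$ and $v(0)=\pm1$.

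\emph{Concluding $(f)=(z)$.} This is the step I expect to be the main obstacle. We cannot argue that $v$ is a unit directly: $v$ may vanish inside $B(0,\rho)$, as $1-2z$ does, so $1/v$ need not lie in $\Rr{\rho}$. Instead I would use a Hopfian argument. Since $(zv)\subseteq(z)$, there is a natural surjection
\[
\Z\cong\Rr{\rho}/(zv)\twoheadrightarrow\Rr{\rho}/(z)\cong\Z .
\]
Every ring surjection $\Z\to\Z$ is the identity, because a ring homomorphism fixes $1$. So this map is injective, its kernel $(z)/(zv)$ vanishes, and hence $(f)=(zv)=(z)$.

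This also shows that any generator $g$ of $(z)$ has the form $zu$ with $u$ a unit, since $z=gw$ and $g=zv$ give $z(1-vw)=0$, and $\Rr{\rho}$ is a domain.
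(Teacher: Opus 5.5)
Your argument is correct, but it takes a different route from the paper. The paper never looks at the coefficients of a generator. It takes the surjection $\pi\colon\Rr{\rho}\to\Z$ with kernel $I$, sets $m=\pi(z)$, and applies $\pi$ to the identities $(1-z)\sum_n z^n=1$ and $(1-z^2)\sum_n z^{2n}=1$, which hold in $\Rr{\rho}$ because $\rho\le 1$. So $1-m$ and $1-m^2$ are units of $\Z$, which forces $m=0$, hence $\pi=\mathrm{ev}_0$ and $I=(z)$. That is shorter and never uses that $I$ is principal: it shows $\mathrm{ev}_0$ is the only ring map $\Rr{\rho}\to\Z$. You use $\rho\le 1$ differently, through the isomorphism $\Rr{\rho}/p\Rr{\rho}\cong\mathbb{F}_p[[z]]$. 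From the size of $\mathbb{F}_p[[z]]/(\bar f)$ you extract $p\mid a_0$ for every prime $p$. The rigidity of $\Z$ (a unital surjection $\Z\to\Z$ is the identity) then upgrades $(f)\subseteq(z)$ to $(f)=(z)$, which neatly sidesteps the issue you correctly flag, that $v$ is not visibly a unit. Only $a_0=0$ is used in that final step, so the conclusion $a_1=\pm1$ is superfluous. Your argument also works for non-principal $I$, since the image of any ideal in the DVR $\mathbb{F}_p[[z]]$ is principal. What your route buys, at the cost of extra length, is the structural fact that $\Rr{\rho}/p\Rr{\rho}$ is the same ring $\mathbb{F}_p[[z]]$ for every $\rho\in(0,1]$.
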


\begin{proof}
Evaluation at $0$ gives $\Rr{\rho}/(z) \cong \Z$.

Conversely, let $\pi \colon \Rr{\rho} \twoheadrightarrow \Z$ be a
surjective ring homomorphism with principal kernel $(f)$.  Since $\Z$
is the prime subring of both rings, $\pi$ fixes each integer.  Let
$m = \pi(z) \in \Z$.

\emph{Step 1.}  The element $h = \sum_{n\geq 0} z^n$ belongs to
$\Rr{\rho}$ (since the radius of convergence is $1$ and $\rho \leq
1$) and satisfies $(1-z)h = 1$.  Applying $\pi$: $(1-m)\pi(h) = 1$
in $\Z$, so $m \in \{0, 2\}$.

\emph{Step 2.}  Similarly $k = \sum_{n\geq 0} z^{2n} \in \Rr{\rho}$
satisfies $(1-z^2)k = 1$, giving $(1-m^2)\pi(k) = 1$.  Since $1-4
= -3 \notin \Z^\times$, $m \neq 2$.

Hence $m = 0$ and $\pi = \mathrm{ev}_0$, so $(f) = \ker\mathrm{ev}_0
= (z)$.
\end{proof}

\begin{lemma}[Isomorphisms are substitutions]
\label{lem:substitution}
Let $\phi \colon \Rr{\rho_1} \to \Rr{\rho_2}$ be a ring isomorphism,
and set $g = \phi(z)$.  Then $(g) = (z)$ in $\Rr{\rho_2}$, so $g = zu$
for a unit $u$ with $u(0) = \pm 1$; and for every $f = \sum_n a_n z^n
\in \Rr{\rho_1}$,
\[
  \phi(f) = \sum_{n\geq 0} a_n g^n
\]
as elements of $\Rr{\rho_2}$ (equivalently, as formal power series in
$\Z[[z]]$).  In particular, writing $h = \phi^{-1}(z)$, the holomorphic
functions $g, h$ satisfy $g(h(w)) = w$ and $h(g(z)) = z$ wherever both
sides are defined.
\end{lemma}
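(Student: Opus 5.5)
First I show that $(g)=(z)$. The ring isomorphism $\phi$ carries principal ideals to principal ideals, and it preserves quotients up to isomorphism: $\Rr{\rho_2}/\phi(I)\cong\Rr{\rho_1}/I$. Applying Lemma~\ref{lem:z_ideal} in both rings, $\phi((z))=(\phi(z))=(g)$ is a principal ideal of $\Rr{\rho_2}$ with quotient $\Z$, so $(g)=(z)$. Hence $g=zu$ and $z=gv$ for some $u,v\in\Rr{\rho_2}$. Since $\Z[[z]]$ is a domain, $z=zuv$ gives $uv=1$, so $u$ is a unit. Units of $\Rr{\rho_2}$ have unit constant term in $\Z$, so $u(0)=\pm1$.

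Next comes the substitution formula. Fix $f=\sum a_nz^n\in\Rr{\rho_1}$ and $N\ge1$. Write $f=P_N+z^N r_N$, where $P_N=\sum_{n<N}a_nz^n$. The key point is that $r_N$ really lies in $\Rr{\rho_1}$: it has integer coefficients, and $(f-P_N)/z^N$ converges on $B(0,\rho_1)$ because removing a polynomial and dividing by $z^N$ does not change the radius of convergence. Because $\phi$ fixes $\Z$ (the prime subring), it fixes polynomials in the sense that $\phi(P_N)=\sum_{n<N}a_ng^n$. Therefore
\[
\phi(f)-\sum_{n<N}a_ng^n=g^N\phi(r_N)\in (g)^N=(z)^N .
\]
Because $g=zu$ with $u(0)=\pm1$, the formal series $\sum_n a_ng^n$ converges $z$-adically in $\Z[[z]]$, and its partial sums beyond $N$ differ from the $N$-th partial sum only in $(z)^N\Z[[z]]$. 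So $\phi(f)-\sum_{n\ge0}a_ng^n\in\bigcap_N z^N\Z[[z]]=0$ as formal power series. This is the identity claimed, read in $\Z[[z]]$, and since $\phi(f)\in\Rr{\rho_2}$ it is also an identity of elements of $\Rr{\rho_2}$.

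Finally, the inverse relations. Apply the same argument to $\phi^{-1}$ with $h=\phi^{-1}(z)$, which likewise has the form $zu'$. Then $z=\phi(\phi^{-1}(z))=\phi(h)=h\circ g$ formally, and symmetrically $g\circ h=z$. To upgrade these to identities of holomorphic functions, I would argue as follows. On a small disk where $|g|$ is below the radius of convergence of $h$, the formal composite agrees with the analytic composite, because the double series converges absolutely there. The identity theorem then extends $h(g(z))=z$ to the connected region where the composite is defined. I expect this last point, matching formal and analytic composition, to be the only mildly delicate step, and it is handled by this standard absolute-convergence argument near $0$.
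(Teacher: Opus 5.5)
Your proposal is correct and follows the paper's proof step for step. You obtain $(g)=(z)$ from Lemma~\ref{lem:z_ideal}, establish $\phi(f)\equiv\sum_{n<N}a_ng^n \pmod{(z)^N}$ for every $N$, and use $\bigcap_N z^N\Z[[z]]=0$; you then run the same argument for $\phi^{-1}$ to get the formal inverse identities. You also fill in details the paper leaves implicit: why $u$ is a unit, why $r_N\in\Rr{\rho_1}$, and how formal composition matches analytic composition. One small caveat on that last point: the set where $h\circ g$ is defined need not be connected, so the identity theorem gives $h(g(z))=z$ only on the component containing $0$. That is all Proposition~\ref{prop:z_fixed} uses, since there the domain is the whole disk $B(0,\rho_2)$.
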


\begin{proof}
By Lemma~\ref{lem:z_ideal}, $\phi((z)) = (z)$, so $g = zu$ with $u
\in \Rr{\rho_2}^\times$, $u(0) = \pm 1$; in particular $(g) = (z)$ and
$g^n \in (z)^n$.

Fix $f = \sum_n a_n z^n \in \Rr{\rho_1}$ and $N \geq 1$.  Since $f -
\sum_{n<N} a_n z^n \in (z)^N$, write it as $z^N \tilde f$ with $\tilde
f \in \Rr{\rho_1}$.  Applying the ring homomorphism $\phi$ (which fixes
$\Z$, so $\phi(\sum_{n<N} a_n z^n) = \sum_{n<N} a_n g^n$):
\[
  \phi(f) - \sum_{n<N} a_n g^n = \phi(z^N \tilde f) = g^N \phi(\tilde f)
  \in (g)^N = (z)^N.
\]
Thus $\phi(f) \equiv \sum_{n<N} a_n g^n \pmod{(z)^N}$ for every $N$.
The formal power series $\sum_n a_n g^n$ (well-defined in $\Z[[z]]$
since $g^n \in (z)^n$) therefore agrees with the power series of
$\phi(f)$ modulo $(z)^N$ for all $N$.  As $\bigcap_N (z)^N = \{0\}$
in $\Z[[z]]$ (a power series divisible by $z^N$ for all $N$ is zero),
we conclude $\phi(f) = \sum_n a_n g^n$ in $\Z[[z]]$, hence in
$\Rr{\rho_2}$.

Applying this to $\phi^{-1}$ (with $\phi^{-1}(z) = h$): $\phi^{-1}(F)
= \sum_n c_n h^n = F(h)$ for $F = \sum_n c_n z^n \in \Rr{\rho_2}$.
Taking $F = g$: $z = \phi^{-1}(\phi(z)) = \phi^{-1}(g) = g(h)$ as
formal power series, so $g(h(w)) = w$; symmetrically $h(g(z)) = z$.
Both identities hold as formal power series, hence as holomorphic
functions on the disks where the compositions are defined.
\end{proof}

\begin{lemma}[A natural boundary at radius $\rho$]
\label{lem:gap}
For every $\rho \in (0,1]$ there exists $f \in \Rr{\rho}$ whose radius
of convergence is exactly $\rho$ and for which the circle $|w| = \rho$
is a natural boundary: $f$ admits no analytic continuation to any
neighborhood of any point of $\{|w| = \rho\}$.
\end{lemma}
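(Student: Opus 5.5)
The plan is to build $f$ as a lacunary series $f(w)=\sum_{k\ge 1} a_k w^{n_k}$ with positive integer coefficients $a_k$ and very sparse exponents $n_k$, arranged so that $|a_k|^{1/n_k}\to 1/\rho$. If the gaps satisfy the Hadamard condition $n_{k+1}/n_k\ge \lambda>1$, the Hadamard gap theorem then makes the circle of convergence a natural boundary. This is exactly what the lemma requires. The case $\rho=1$ is immediate: take $f(w)=\sum_k w^{2^k}$, whose coefficients all equal $1$, so it lies in $\Rr{1}$, and whose natural boundary at $|w|=1$ is classical.

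For general $\rho\in(0,1)$, set $n_k=2^k$ and $a_k=\lfloor \rho^{-n_k}\rfloor$. Since $\rho<1$, we have $\rho^{-n_k}\ge 1$, so $a_k$ is a positive integer with
\[
  \rho^{-n_k}-1<a_k\le \rho^{-n_k}.
\]
Because $\rho^{-n_k}\to\infty$, this gives $a_k^{1/n_k}\to 1/\rho$. Every other Taylor coefficient is zero, so the Cauchy--Hadamard formula gives
\[
  \limsup_n |c_n|^{1/n}=\lim_k a_k^{1/n_k}=1/\rho,
\]
and the radius of convergence is exactly $\rho$. Hence $f\in\Z[[w]]\cap\mathcal{O}(B(0,\rho))=\Rr{\rho}$. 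The exponent ratio is $n_{k+1}/n_k=2$, so Hadamard's gap theorem (equivalently, Fabry's gap theorem) applies: a power series whose nonzero coefficients occur only at such lacunary exponents cannot be continued analytically across any point of its circle of convergence. That yields the natural boundary.

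The only real point to check is that the gap theorem has no hypothesis beyond the gap condition and a finite positive radius. It has none, since the classical statement covers arbitrary complex coefficients. If one prefers a self-contained reduction, the substitution $f(\rho\zeta)=\sum_k a_k\rho^{n_k}\zeta^{n_k}$ turns the question into the unit-disk case. The coefficients $a_k\rho^{n_k}\in(1-\rho^{n_k},1]$ tend to $1$, so the rescaled series has radius $1$, and the gap theorem gives a natural boundary at $|\zeta|=1$, which transfers back to $|w|=\rho$. I expect no obstacle beyond citing the gap theorem correctly; the integer-rounding step is the only place where the integrality constraint enters, and the estimate above handles it.
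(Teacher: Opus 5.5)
Your proposal is correct and is essentially the paper's proof: the same lacunary series $\sum_k \lfloor \rho^{-2^k}\rfloor w^{2^k}$, the Cauchy--Hadamard formula to get radius exactly $\rho$, and the Hadamard gap theorem for the natural boundary. Two minor points, neither affecting correctness: the separate treatment of $\rho=1$ is unnecessary, since the general formula already gives all coefficients equal to $1$; and Fabry's theorem is a generalization of Hadamard's rather than equivalent to it.
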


\begin{proof}
Put $f(w) = \sum_{n\geq 0} c_n w^{2^n}$ with $c_n = \lfloor \rho^{-2^n}
\rfloor \in \Z_{\geq 1}$.  Then $|c_n|^{1/2^n} \to \rho^{-1}$, so the
radius of convergence is $\rho$, whence $f \in \Rr{\rho}$.  The nonzero
terms occur at exponents $2^n$, which satisfy the Hadamard gap
condition $2^{n+1}/2^n = 2 > 1$; by the Hadamard gap
theorem~\cite{Remmert}, $|w| = \rho$ is a natural boundary.
\end{proof}

\section{Main result}
\label{sec:main}

\begin{proposition}
\label{prop:z_fixed}
Let $\phi \colon \Rr{\rho_1} \to \Rr{\rho_2}$ be a ring isomorphism.
Then $\rho_1 = \rho_2$ and $\phi(z) = \pm z$.
\end{proposition}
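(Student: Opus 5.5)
Set $g=\phi(z)$ and $h=\phi^{-1}(z)$. By Lemma~\ref{lem:substitution}, $\phi(f)=f\circ g$ as formal power series, $g=zu$ with $u(0)=\pm1$, and $g,h$ are mutually inverse formal power series with integer coefficients. Since $g\in\Rr{\rho_2}$ and $h\in\Rr{\rho_1}$, $g$ is holomorphic on $B(0,\rho_2)$ and $h$ on $B(0,\rho_1)$. The plan has three steps: show $g(B(0,\rho_2))\subseteq B(0,\rho_1)$ and $h(B(0,\rho_1))\subseteq B(0,\rho_2)$; deduce that $g$ is a biholomorphism between the disks; then use Schwarz together with integrality.

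\emph{Containment (main obstacle).} Take the gap series $f\in\Rr{\rho_1}$ of Lemma~\ref{lem:gap}, so $F:=\phi(f)\in\Rr{\rho_2}$ is holomorphic on $B(0,\rho_2)$. Let $U=\{z\in B(0,\rho_2): |g(z)|<\rho_1\}$ and let $U_0$ be its connected component containing $0$. On a small disk around $0$, the formal identity $F=f\circ g$ is an identity of convergent series: there $|g|$ is small, and the composed series can be rearranged by absolute convergence. By the identity theorem, $F=f\circ g$ on all of $U_0$. Suppose $U_0\neq B(0,\rho_2)$. Then there is a boundary point $z_1$ of $U_0$ inside $B(0,\rho_2)$, and $|g(z_1)|=\rho_1$. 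Since $g$ is open, $g(z_1)=w_1$ lies on the natural boundary. First I would choose $z_1$ with $g'(z_1)\neq0$; this is possible because $g'$ has isolated zeros and $\partial U_0\cap B(0,\rho_2)$ is a real-analytic level set, which is not discrete. Then $g$ has a local inverse $\gamma$ near $w_1$, and $F\circ\gamma$ is holomorphic on a full neighborhood of $w_1$. It agrees with $f$ on the part of that neighborhood lying inside $B(0,\rho_1)$ and corresponding to $U_0$. This part is an open set containing points of $B(0,\rho_1)$ accumulating at $w_1$, and it is connected if the neighborhood is chosen small. Hence $F\circ\gamma$ continues $f$ across $|w|=\rho_1$, a contradiction. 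So $U_0=B(0,\rho_2)$, giving $|g|<\rho_1$ on $B(0,\rho_2)$. The symmetric argument with a gap series in $\Rr{\rho_2}$ and $\phi^{-1}$ gives $h(B(0,\rho_1))\subseteq B(0,\rho_2)$. The delicate points are the choice of a noncritical boundary point and the connectedness check needed to invoke the natural-boundary property.

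\emph{Biholomorphism and conclusion.} With both containments, $h\circ g$ is holomorphic on $B(0,\rho_2)$ and equals $z$ near $0$ by Lemma~\ref{lem:substitution}, so it equals $z$ on the whole disk. Likewise $g\circ h=\mathrm{id}$ on $B(0,\rho_1)$. Thus $g\colon B(0,\rho_2)\to B(0,\rho_1)$ is a biholomorphism fixing $0$. Rescale to the unit disk by setting $G(\zeta)=g(\rho_2\zeta)/\rho_1$; this is an automorphism of the unit disk fixing $0$. By the Schwarz lemma $G$ is a rotation, so $g(z)=cz$ with $|c|=\rho_1/\rho_2$. Integrality of the coefficients gives $c=u(0)=\pm1$, hence $\rho_1=\rho_2$ and $\phi(z)=g=\pm z$.
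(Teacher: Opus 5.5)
Your proof is correct and follows the paper's three-step plan: the gap-series containment $g(B(0,\rho_2))\subseteq B(0,\rho_1)$ and its mirror for $h$, then the biholomorphism via the compositional-inverse identities, then Schwarz plus integrality. The one real difference is in the containment step, and there your version is more careful than the paper's.

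The paper takes $z_1$ with $|g(z_1)|>\rho_1$ and writes $f=(f\circ g)\circ g^{-1}$ near $w_1=g(z_1)$. But the holomorphic function $\phi(f)$ agrees with $f(g(z))$ only on the component of $g^{-1}(B(0,\rho_1))$ containing $0$, so near such a $z_1$ the expression $f(g(z))$ has no meaning. Moreover, a holomorphic function near a point of modulus $>\rho_1$ would not by itself contradict radius of convergence $\rho_1$. Your argument supplies what is missing:
\begin{itemize}
\item the identity theorem on $U_0$;
\item a noncritical point $z_1\in\partial U_0$ with $|g(z_1)|=\rho_1$;
\item connectedness of $W\cap B(0,\rho_1)$ for a small disk $W$ about $w_1$, which forces $\gamma(W\cap B(0,\rho_1))\subseteq U_0$.
\end{itemize}
Together these let you invoke the natural boundary at a point of the circle, which is where it actually applies.

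Your justification for the noncritical boundary point ("a real-analytic level set, which is not discrete") is a little loose, since $\partial U_0$ is only part of the level set $\{|g|=\rho_1\}$. A clean fix uses maximum modulus. Suppose $\partial U_0\cap B(0,\rho_2)$ lay in the discrete zero set of $g'$. Since a disk minus a closed discrete set is connected, $U_0$ would be $B(0,\rho_2)$ minus that set. Then $|g|$ would attain a local maximum $\rho_1$ at one of the removed points, forcing $g$ to be constant, which contradicts $g'(0)=\pm1$.

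Your final step, $c=g'(0)=u(0)=\pm1$, is a slightly quicker substitute for the paper's observation that $\lambda$ and $1/\lambda$ are both integers.
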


\begin{proof}
By Lemma~\ref{lem:substitution}, $g = \phi(z) = zu$ and $h = \phi^{-1}(z)
= zv$ with $u,v$ units, $u(0)=v(0)=\pm1$, and $g,h$ are formal
compositional inverses in $\Z[[z]]$:
\[
  h(g(z)) = z, \qquad g(h(w)) = w.
\]
Moreover, by Lemma~\ref{lem:substitution}, $\phi(f) = f\circ g$ for
every $f \in \Rr{\rho_1}$, and $\phi^{-1}(F) = F\circ h$ for every $F
\in \Rr{\rho_2}$.

\textbf{Step 1: $g(B(0,\rho_2)) \subseteq B(0,\rho_1)$.}

Since $g$ is nonconstant, $g(B(0,\rho_2))$ is open (open mapping
theorem).  If $g(B(0,\rho_2)) \not\subseteq B(0,\rho_1)$, then, being
open, $g(B(0,\rho_2))$ cannot be contained in the closed disk
$\overline{B(0,\rho_1)}$ either (an open set meeting $\{|w| = \rho_1\}$
contains points of modulus $> \rho_1$); so there is $z_1 \in
B(0,\rho_2)$ with $|g(z_1)| > \rho_1$.  Since critical points of $g$
are isolated, we may take $g'(z_1) \neq 0$.  Set $w_1 = g(z_1)$, so
$|w_1| > \rho_1$.

Let $f \in \Rr{\rho_1}$ be the gap series of Lemma~\ref{lem:gap}, whose
circle of convergence $|w| = \rho_1$ is a natural boundary.  By
Lemma~\ref{lem:substitution}, $\phi(f) = f \circ g \in \Rr{\rho_2}$,
so $f\circ g$ is holomorphic on $B(0,\rho_2)$, in particular near
$z_1$.  As $g'(z_1) \neq 0$, $g$ has a holomorphic local inverse
$g^{-1}$ near $w_1$, and near $w_1$
\[
  f(w) = (f\circ g)\big(g^{-1}(w)\big)
\]
is holomorphic.  Thus $f$ extends holomorphically to a neighborhood
of $w_1$ with $|w_1| > \rho_1$, contradicting that $f$ has radius of
convergence exactly $\rho_1$.  Hence $g(B(0,\rho_2)) \subseteq
B(0,\rho_1)$.

The identical argument applied to $\phi^{-1}$ gives $h(B(0,\rho_1))
\subseteq B(0,\rho_2)$.

\textbf{Step 2: $g$ is a biholomorphism $B(0,\rho_2) \to B(0,\rho_1)$.}

By Step 1 both composites $h \circ g$ and $g \circ h$ are defined and
holomorphic, mapping $B(0,\rho_2) \to B(0,\rho_2)$ and $B(0,\rho_1) \to
B(0,\rho_1)$ respectively, and by the compositional-inverse identities
they equal the respective identity maps.  Hence $g$ is a
biholomorphism of $B(0,\rho_2)$ onto $B(0,\rho_1)$ with inverse $h$.

\textbf{Step 3: $g(z) = \pm z$ and $\rho_1 = \rho_2$.}

The scaled map $G(w) = g(\rho_2 w)/\rho_1 \colon \mathbb{D} \to
\mathbb{D}$ is a biholomorphic automorphism fixing $0$, hence a
rotation by the Schwarz lemma: $G(w) = e^{i\theta} w$.  Therefore
$g(z) = e^{i\theta}(\rho_1/\rho_2)\, z = \lambda z$ with $\lambda =
e^{i\theta}\rho_1/\rho_2$.

Since $g \in \Rr{\rho_2}$ has integer Taylor coefficients, $\lambda
\in \Z$; its inverse $h(w) = w/\lambda \in \Rr{\rho_1}$ has integer
coefficients, so $1/\lambda \in \Z$.  Hence $\lambda = \pm 1$, and
$g(z) = \pm z$.  Then $g$ maps $B(0,\rho_2)$ onto $B(0,\rho_2)$; as
this image equals $B(0,\rho_1)$ by Step 2, $\rho_1 = \rho_2$.  Thus
$\phi(z) = \pm z$ and $\rho_1 = \rho_2$.
\end{proof}

\begin{theorem}
\label{thm:main}
For $\rho_1, \rho_2 \in (0,1]$, $\Rr{\rho_1} \cong \Rr{\rho_2}$ if and
only if $\rho_1 = \rho_2$.
\end{theorem}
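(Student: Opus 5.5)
The theorem follows almost immediately from Proposition~\ref{prop:z_fixed}, so the plan is to split into the two directions and let that proposition carry the weight.

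\emph{The ``if'' direction.} If $\rho_1 = \rho_2$, the identity map $\Rr{\rho_1} \to \Rr{\rho_2}$ is a ring isomorphism, so there is nothing to prove.

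\emph{The ``only if'' direction.} Suppose $\phi \colon \Rr{\rho_1} \to \Rr{\rho_2}$ is any ring isomorphism. I will invoke Proposition~\ref{prop:z_fixed} directly; it yields $\rho_1 = \rho_2$ (and, as a by-product, $\phi(z) = \pm z$). No further hypotheses need checking, since the proposition applies to an arbitrary abstract ring isomorphism with $\rho_1, \rho_2 \in (0,1]$. That is exactly the standing range in the theorem.

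\emph{Where the substance lies.} All the real content sits upstream, and I would cite it in this order:
\begin{itemize}
\item Lemma~\ref{lem:z_ideal} pins down the ideal $(z)$ intrinsically, so $\phi((z)) = (z)$.
\item Lemma~\ref{lem:substitution} turns $\phi$ into substitution by $g = \phi(z)$, which removes any continuity concerns.
\item Lemma~\ref{lem:gap} supplies a series with a natural boundary at $|w| = \rho_1$. This forces $g(B(0,\rho_2)) \subseteq B(0,\rho_1)$, and symmetrically $h(B(0,\rho_1)) \subseteq B(0,\rho_2)$ for $h = \phi^{-1}(z)$.
\item The Schwarz lemma combined with integrality of the coefficients of $g$ and $g^{-1}$ then gives $g = \pm z$ and $\rho_1 = \rho_2$.
\end{itemize}
The only step I would expect to scrutinize is the containment argument, since it is where analytic continuation across the natural boundary is used. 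At the level of the theorem, however, it is already packaged in Proposition~\ref{prop:z_fixed}, so the proof of the theorem is a two-line deduction.
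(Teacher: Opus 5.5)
Your proposal is correct and matches the paper's proof: the identity map gives the `if' direction, and Proposition~\ref{prop:z_fixed}, applied to an arbitrary ring isomorphism, gives $\rho_1=\rho_2$. On the containment step you flagged: the contradiction must come from the natural boundary, not just the radius of convergence. To make it rigorous, take a path from $0$ to $z_1$ that avoids the (discrete) critical set of $g$, and apply the local inverse at the first point where $|g|=\rho_1$; there $\phi(f)\circ g^{-1}$ genuinely continues $f$ across the circle.
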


\begin{proof}
If $\rho_1 = \rho_2$ the rings are identical.  Conversely, any isomorphism
satisfies $\rho_1 = \rho_2$ by Proposition~\ref{prop:z_fixed}.
\end{proof}

\section{The $p$-th root reconstruction}
\label{sec:pth}

The main theorem is proved above.  We give an independent reconstruction
of $\rho$ from the ring structure, showing $\rho$ is explicitly recoverable.

\begin{proposition}
\label{prop:pth_root}
Let $p$ be prime and $n, k \geq 1$.  There exists $h \in \Rr{\rho}$
with $h(0) = 1$ and $h^p = 1 - p^n z^k$ if and only if $\rho \leq
p^{-n/k}$.
\end{proposition}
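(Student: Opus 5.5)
We argue through the unique formal candidate for $h$. In $\Z[[z]]$, or more precisely in $\mathbb{Q}[[z]]$, the condition $h(0)=1$ together with $h^p = 1 - p^n z^k$ determines $h$ uniquely. It must be the binomial series
\[
  h(z) = (1-p^n z^k)^{1/p} = \sum_{j\geq 0} \binom{1/p}{j} (-1)^j p^{nj} z^{kj}.
\]
Uniqueness holds because two solutions with constant term $1$ differ by a $p$-th root of unity in $\mathbb{Q}[[z]]^\times$ with constant term $1$, and the only such root is $1$. Hence the question reduces to two checks on this specific series. First, its coefficients $b_j := (-1)^j p^{nj}\binom{1/p}{j}$ must be integers. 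Second, it must converge on $B(0,\rho)$.

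\textbf{Integrality.} We show $b_j \in \Z$ for all $j$ and all $n\geq 1$. It suffices to treat $n=1$, since larger $n$ only multiplies $b_j$ by $p^{(n-1)j}$. For a prime $q \neq p$, the quantity $\binom{1/p}{j}$ is $q$-integral, because $1/p \in \Z_q$ and binomial coefficients of $q$-adic integers are $q$-adic integers. At $q=p$ we compute directly:
\[
  \binom{1/p}{j} = \frac{\prod_{i=0}^{j-1}(1-ip)}{p^j\, j!}.
\]
The numerator is a $p$-adic unit, so $v_p\binom{1/p}{j} = -j - v_p(j!)$. Then
\[
  v_p(b_j) = j - j - v_p(j!) = -v_p(j!),
\]
which is negative for $j \geq p$. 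So the naive count fails, and this is the step I expect to be the main obstacle.

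The fix is to apply the classical fact, Dwork- or Eisenstein-type, that $(1+pz)^{1/p}$ does not lie in $\Z_p[[z]]$ while $(1+p^2 z)^{1/p}$ does. The standard criterion says that $(1+p^m z)^{1/p} \in \Z_p[[z]]$ if and only if
\[
  m j - j - v_p(j!) \geq 0 \quad \text{for all } j,
\]
which holds exactly when $m \geq 2$ for odd $p$, and $m \geq 3$ for $p=2$. I would then revisit the statement. Either $n$ is implicitly $\geq 2$, or the intended claim is only the radius condition, with integrality meant to be checked separately. I would verify the small case $p=2$, $n=1$: $\sqrt{1-2z}$ has coefficient $-1/2$ at $z^2$, which is not an integer. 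So as literally stated, the ``if'' direction needs an integrality hypothesis on $n$, and I would flag this. Under such a hypothesis, integrality follows from the valuation estimate above via Legendre's bound $v_p(j!) < j/(p-1)$.

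\textbf{Radius.} For the radius of convergence, the series $(1-u)^{1/p}$ has radius $1$ in $u$ and is genuinely singular at $u=1$. Substituting $u = p^n z^k$ gives radius exactly $p^{-n/k}$ in $z$. Therefore $h \in \mathcal{O}(B(0,\rho))$ if and only if $\rho \leq p^{-n/k}$. For the ``only if'' direction, any $h\in\Rr{\rho}$ with the stated properties equals the unique formal solution above, so its radius is at most $p^{-n/k}$. Concretely, $h$ would have to be holomorphic at a point $z_0$ where $p^n z_0^k = 1$, but near $z_0$ the function $h^p$ has a simple zero, which is impossible for the $p$-th power of a holomorphic function. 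For the ``if'' direction, convergence holds on $B(0,p^{-n/k}) \supseteq B(0,\rho)$, and integrality comes from the previous step.
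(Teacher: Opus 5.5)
Your diagnosis is right, and the error it exposes is in the paper, not in your argument. Your outline matches the paper's: a unique formal solution, integrality checked prime by prime, and the radius read off from the binomial series.

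The paper goes wrong at the prime $\ell=p$. It asserts that $(1-x)^{1/p}\in\Z_p[[x]]$ for $x\in p\Z_p$ and substitutes $x=p^nz^k$, ``valid since $n\geq 1$.'' That misquotes the $p$-adic binomial theorem, which needs the exponent in $\Z_p$. Since $1/p\notin\Z_p$, one has $v_p\binom{1/p}{m}=-m-v_p(m!)$, hence $v_p(c_m)=(n-1)m-v_p(m!)$, exactly as you computed. For $n=1$ this equals $-1$ at $m=p$. Your example $\sqrt{1-2z}=1-z-\tfrac12 z^2-\cdots$ shows it, as does $(1-3z)^{1/3}$, whose $z^3$-coefficient is $-5/3$. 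So for $n=1$ no such $h$ lies in any $\Rr{\rho}$, and the ``if'' direction as stated is false. The correct statement is: for $n=1$ no such $h$ exists; for $n\geq 2$ the equivalence holds. In that case your uniqueness and radius arguments give both directions, including the simple-zero argument at a root of $p^nz^k=1$.

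One correction to your own account: the threshold ``$m\geq 3$ for $p=2$'' is wrong. It contradicts both the fact you quote, $(1+p^2z)^{1/p}\in\Z_p[[z]]$, and your Legendre estimate. For $n\geq 2$ and every prime $p$, $(n-1)m-v_p(m!)\geq m-v_p(m!)>m-\tfrac{m}{p-1}\geq 0$. For $p=2$ explicitly, $m-v_2(m!)=s_2(m)\geq 1$, where $s_2$ is the binary digit sum. Indeed $\sqrt{1-4z}=1-2\sum_{m\geq 1}C_{m-1}z^m$, with $C_j$ the Catalan numbers.

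What you are recalling is the condition $v_2(x)\geq 3$ for the $2$-adic series $\sum_m\binom{1/2}{m}x^m$ to converge at a point $x$. There the term valuations must tend to infinity, and at $v_2(x)=2$ they equal $s_2(m)$, which is $1$ at powers of $2$. Formal integrality of the coefficients only needs those valuations to be nonnegative. So the right hypothesis is simply $n\geq 2$ for all $p$.

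This costs nothing downstream. Any $n/k$ with $n=1$ equals $2/(2k)$, so the set of values $p^{-n/k}\geq\rho$ entering the paper's corollary is unchanged, and so is its infimum.
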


\begin{proof}
The unique formal power series solution is
\begin{align*}
  h(z) &= (1-p^n z^k)^{1/p} = \sum_{m=0}^\infty \binom{1/p}{m}(-1)^m
  p^{nm} z^{km}, \\
  c_m &= (-1)^m \cdot
  \frac{p^{(n-1)m}\prod_{j=0}^{m-1}(1-pj)}{m!}.
\end{align*}

\emph{Integrality.}  We show $c_m \in \Z_\ell$ for every prime $\ell$,
whence $c_m \in \bigcap_\ell \Z_\ell = \Z$.

For $\ell = p$: by the $p$-adic binomial theorem~\cite[Ch.~IV]{Koblitz},
$(1-x)^{1/p} \in \Z_p[[x]]$ for $x \in p\Z_p$; applying with $x =
p^n z^k$ (valid since $n \geq 1$) gives $c_m \in \Z_p$.

For $\ell \neq p$: here $p$ is a unit in $\Z_\ell$, so $1/p \in \Z_\ell$.
The generalized binomial coefficient $\binom{t}{m} = t(t-1)\cdots(t-m+1)
/m!$ is a $\Z_\ell$-valued function of $t \in \Z_\ell$ (it is the
uniform limit of $\binom{a}{m} \in \Z$ over integers $a \to t$ in
$\Z_\ell$, and $\Z_\ell$ is complete).  Hence $\binom{1/p}{m} \in
\Z_\ell$, and $c_m = (-1)^m p^{nm}\binom{1/p}{m} \in \Z_\ell$ since
$p^{nm} \in \Z_\ell$.

\emph{Radius.}  The binomial series $(1-w)^{1/p}$ has radius of
convergence $1$ in $w$, so $(1-p^n z^k)^{1/p}$ has radius $p^{-n/k}$.
Hence $h \in \Rr{\rho}$ iff $\rho \leq p^{-n/k}$.
\end{proof}

\begin{corollary}
Setting $I(\Rr{\rho}) = \{(p,n,k) : (1-p^n z^k)^{1/p} \in \Rr{\rho}\}$,
we have $\rho = \inf\{p^{-n/k} : (p,n,k) \in I(\Rr{\rho})\}$, so
$I(\Rr{\rho})$ determines $\rho$ uniquely.  Since any isomorphism satisfies
$\phi(z) = \pm z$ and hence $\phi(1-p^n z^k) = 1 \mp p^n z^k$ (with the
same radius threshold $p^{-n/k}$), $I$ is an isomorphism invariant.
\end{corollary}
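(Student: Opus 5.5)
The plan is to read off both assertions from Proposition~\ref{prop:pth_root} and Proposition~\ref{prop:z_fixed}; no new analytic input should be needed.

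\textbf{Recovering $\rho$.} By Proposition~\ref{prop:pth_root}, $(p,n,k) \in I(\Rr{\rho})$ if and only if $\rho \le p^{-n/k}$. Hence
\[
  \{p^{-n/k} : (p,n,k)\in I(\Rr{\rho})\} = \{p^{-n/k} : p \text{ prime},\ n,k\ge 1\} \cap [\rho,\infty).
\]
Every element of this set is therefore at least $\rho$. For the reverse inequality I will fix $p=2$. As $n/k$ ranges over the positive rationals, the numbers $2^{-n/k}$ are dense in $(0,1)$. So for $\rho<1$ there are elements of the set in $[\rho,\rho+\varepsilon)$ for every $\varepsilon>0$, and the infimum is exactly $\rho$.

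The one delicate case is $\rho=1$. Since $n\ge 1$, we always have $p^{-n/k}<1$, so $I(\Rr{1})=\emptyset$. I will state explicitly that $I(\Rr{\rho})=\emptyset$ exactly when $\rho=1$, and that the formula is read with the convention $\inf\emptyset = 1$ (equivalently, the infimum is taken within $(0,1]$). Either way, $I$ determines $\rho$. I expect this boundary case to be the main point needing care, though it is a matter of convention rather than a real obstacle.

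\textbf{Invariance.} Membership in $I$ is phrased ring-theoretically: $(p,n,k)\in I(\Rr{\rho})$ iff there is $h$ with $h^p = 1-p^n z^k$ and $h(0)=1$. The condition $h(0)=1$ means $h-1\in(z)$, and $(z)$ is intrinsic by Lemma~\ref{lem:z_ideal}. Let $\phi$ be an isomorphism. By Proposition~\ref{prop:z_fixed}, $\phi(z)=\pm z$, so $\phi(h)$ satisfies $\phi(h)^p = 1-p^n(\pm z)^k$ and $\phi(h)-1\in(z)$.

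When the sign $(\pm1)^k$ equals $-1$, the target is $1+p^nz^k$. I will rerun the proof of Proposition~\ref{prop:pth_root} with $x=-p^nz^k$. The $p$-adic integrality still holds because $-p^nz^k$ is again divisible by $p$, and the $\ell$-adic integrality for $\ell\ne p$ is unchanged. The radius of convergence is still $p^{-n/k}$. Consequently the root of $1\mp p^nz^k$ exists in $\Rr{\rho}$ under exactly the same threshold $\rho\le p^{-n/k}$, so $I(\Rr{\rho_1}) = I(\Rr{\rho_2})$.

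(Of course, Proposition~\ref{prop:z_fixed} already gives $\rho_1=\rho_2$, which makes invariance immediate. The argument above is only to show that the invariant is checked directly.)
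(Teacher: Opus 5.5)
Your argument is the one the paper intends: read off the threshold $\rho\le p^{-n/k}$ from Proposition~\ref{prop:pth_root}, get the infimum from the density of the ratios $n/k$, and get invariance from $\phi(z)=\pm z$. Your handling of $\rho=1$ genuinely sharpens the statement. Since $p^{-n/k}<1$ always, $I(\Rr{1})=\emptyset$, so the formula $\rho=\inf\{\cdots\}$ is correct only under the convention you adopt, which the paper leaves unstated.

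One step is wrongly justified, and the same defect is in Proposition~\ref{prop:pth_root}, which you cite. Divisibility of $x$ by $p$ does not make the coefficients of $(1-x)^{1/p}$ $p$-integral: the $p$-adic binomial theorem needs the exponent in $\Z_p$, and $1/p\notin\Z_p$. In the paper's own formula for $c_m$, the product $\prod_{j<m}(1-pj)$ is prime to $p$, so $v_p(c_m)=(n-1)m-v_p(m!)$. This is negative when $n=1$ and $m\ge p$; for instance $\sqrt{1-2z}=1-z-\tfrac{1}{2}z^2-\cdots$. So no triple $(p,1,k)$ lies in any $I(\Rr{\rho})$. Your equivalence $(p,n,k)\in I(\Rr{\rho})\iff\rho\le p^{-n/k}$, and your claim that a root of $1\mp p^nz^k$ exists below the threshold, hold only for $n\ge 2$, where $v_p(c_m)\ge m-v_p(m!)>0$.

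The corollary nevertheless survives. Your displayed equality of value sets is still true, because $p^{-1/k}=p^{-2/(2k)}$ and $(p,2,2k)$ is admissible; so the density argument goes through with $p=2$ and $n\ge 2$. In the sign-flip case you need no rerun of integrality at all. There $k$ is odd and $\phi(h)=h(-z)$, which already lies in $\Rr{\rho_2}$, so only its radius $p^{-n/k}$ matters. That gives $\rho_2\le p^{-n/k}$, and then $h\in\Z[[z]]$ itself belongs to $\Rr{\rho_2}$.
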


\begin{remark}[Automorphism group]
Every ring automorphism of $\Rr{\rho}$ is $z \mapsto \pm z$, giving
$\mathrm{Aut}(\Rr{\rho}) \cong \Z/2\Z$.
\end{remark}

\begin{remark}[Open questions]
(i)~Are the rings $\Rr{\rho}$ for distinct $\rho$ elementarily equivalent?
(ii)~Do there exist ring homomorphisms $\Rr{\rho_1} \to \Rr{\rho_2}$ for
$\rho_1 \neq \rho_2$?  (iii)~Is $\Rr{\rho}$ a B\'ezout domain?
\end{remark}

\section*{Declaration on the Use of Artificial Intelligence}
In preparing this paper, the authors made use of AI language model
assistance (Claude) as a tool in the writing process---including
\LaTeX{} preparation and editing---and in exploratory work, such as
proof-checking and verifying arguments. All mathematical content and
results have been independently reviewed and verified by the authors,
who take full responsibility for the correctness, originality, and
presentation of this work. No AI system is an author of this paper,
nor was any AI system used to generate mathematical claims that were
not subsequently checked by the authors.

\end{document}